\DeclareSymbolFont{gletters}{FML}{futm}{m}{it}
\DeclareMathSymbol{\PSI}{\mathord}{gletters}  {32}
\renewcommand{\Psi}{\PSI}
\DeclareMathSymbol{\PHI}{\mathord}{gletters}   {30}
\renewcommand{\Phi}{\PHI}
\g@addto@macro\normalsize{%
  \setlength\abovedisplayskip{10pt}
  \setlength\belowdisplayskip{10pt}
  \setlength\abovedisplayshortskip{5pt}
  \setlength\belowdisplayshortskip{8pt}
}
\newtheoremstyle{normal}
{5pt}
{5pt}
{\normalfont}
{}
{\bfseries}
{}
{0.4em}
{\bfseries{\thmname{#1}\thmnumber{ #2}.\thmnote{ \hspace{0.5em}(#3)\newline}}}
\newtheoremstyle{kursiv}
{5pt}
{5pt}
{\itshape}
{}
{\bfseries}
{}
{0.4em}
{\bfseries{\thmname{#1}\thmnumber{ #2}.\thmnote{ \hspace{0.5em}(#3)\newline}}}
\theoremstyle{kursiv}
\theoremstyle{normal}
\newtheorem{thm}{Theorem}
\newtheorem{ex}[thm]{Example}
\newtheorem{cor}[thm]{Corollary}
\newtheorem{lem}[thm]{Lemma}
\newtheorem{prop}[thm]{Proposition}
\renewcommand{\epsilon}{\varepsilon}
\renewcommand{\Re}{\operatorname{Re}\nolimits}
\newcommand{\gr}{\operatorname{Graph}\nolimits}
\newcommand{\pr}{\operatorname{pr}\nolimits}
\renewcommand{\bar}[1]{\overline{#1}}
\newtheoremstyle{PLAIN}{5pt}{5pt}{}{}{\bf}{.}{5pt}{\thmname{#1}\thmnumber{ #2}\thmnote{ #3}}
\theoremstyle{PLAIN}
\newtheorem*{thmA}{Theorem A}
\newtheorem*{thmB}{Theorem B}
\newcommand{\ran}{\operatorname{ran}\nolimits}
\newcommand{\Cnull}{\operatorname{C_0}\nolimits}
\definecolor{grey}{gray}{.3}
\begin{document}
$ $
\vspace{-30pt}

\title{Some remarks on the notions of\\boundary systems and boundary triple(t)s}

\author{Marcus Waurick\hspace{0.5pt}\MakeLowercase{$^{\text{1}}$} and Sven-Ake Wegner\hspace{0.5pt}\MakeLowercase{$^{\text{2}}$}}

\renewcommand{\thefootnote}{}
\hspace{-1000pt}\footnote{\hspace{5.5pt}2010 \emph{Mathematics Subject Classification}: Primary 47B25; Secondary 35G15.\vspace{1.6pt}}

\hspace{-1000pt}\footnote{\hspace{5.5pt}\emph{Key words and phrases}: Boundary triplet, boundary triple, boundary system, extension problem, deficiency index.\vspace{1.6pt}}


\hspace{-1000pt}\footnote{\hspace{0pt}$^{1}$\,University of Strathclyde, Department of Mathematics and Statistics, Livingstone Tower, 26 Richmond Street, Glasgow\linebreak\phantom{x}\hspace{1.2pt}G1\:1XH, Scotland, Phone: +44\hspace{1.2pt}(0)\hspace{1.2pt}141\hspace{1.2pt}/\hspace{1.2pt}548\hspace{1.2pt}-\hspace{1.2pt}3817, E-Mail: marcus.waurick@strath.ac.uk.\vspace{1.6pt}}

\hspace{-1000pt}\footnote{\hspace{0pt}$^{2}$\,Corresponding author: Teesside University, School of Science, Engineering and Design, Stephenson Building, Middles-\linebreak\phantom{x}\hspace{1.2pt}brough, TS1\;3BX, United Kingdom, phone: +44\,(0)\,1642\hspace{1.2pt}/\hspace{1.2pt}73\,-\,8200, E-Mail: s.wegner@tees.ac.uk.\vspace{1.6pt}}

\begin{abstract} In this note we show that if a boundary system in the sense of (Schubert et al.~2015) gives rise to any skew-self-adjoint extension, then it induces a boundary triplet and the classification of all extensions given by (Schubert et al.~2015) coincides with the skew-symmetric version of the classical characterization due to (Gorbachuk et al.~1991). On the other hand we show that for every skew-symmetric operator there is a natural boundary system which leads to an explicit description of at least one maximal dissipative extension. This is in particular also valid in the case that no boundary triplet exists for this operator.
\end{abstract}

\maketitle


\vspace{-10pt}
\section{Skew-self-adjoint extensions of skew-symmetric operators}\label{SEC:1}

\smallskip

Throughout this paper $\mathcal{H}$ denotes a Hilbert space and $H_0\colon D(H_0)\subseteq\mathcal{H}\rightarrow\mathcal{H}$ denotes a densely defined and closed linear operator. We say that $H_0$ is \textit{skew-symmetric} if $H_0\subseteq-H_0^{\star}$ holds. A densely defined operator $H\colon D(H)\subseteq\mathcal{H}\rightarrow\mathcal{H}$ is \textit{skew-self-adjoint} if $H=-H^{\star}$ holds. We point out, cf.~Picard et al.~\cite[footnote on p.~751]{P}, that in the literature the term \textit{skew-adjoint} seems to be more common although this \textquotedblleft{}binary expression\textquotedblright{} might cause confusion. Notice that a skew-self-adjoint extension $H$ of $H_0$ automatically satisfies $H\subseteq-H_0^{\star}$ and that a skew-self-adjoint restriction of $H_0^{\star}$ automatically satisfies $-H_0\subseteq H$. Moreover, $H$ is a skew-self-adjoint extension of $H_0$ if and only if $-H$ is a skew-self-adjoint extension of $-H_0$. Theorems~A and B below thus both deal with the question of determining the skew-self-adjoint extensions of $H_0$.

\medskip

Schubert et al.~\cite[Definition 3.1]{BS} defined the following notion. A quintuple $(\Omega,\mathcal{G}_1,\mathcal{G}_2,F,\omega)$ is a \textit{boundary system} for $H_0$, if $\Omega$ is a sesquilinear form on $\mathcal{H}\oplus\mathcal{H}$, $\mathcal{G}_1$ and $\mathcal{G}_2$ are Hilbert spaces, $\omega$ is a sesquilinear form on $\mathcal{G}_1\oplus\mathcal{G}_2$ and $F\colon\gr(H_0^{\star})\rightarrow\mathcal{G}_1\oplus\mathcal{G}_2$ is a surjective map such that
\begin{equation}\label{BS-1}
\Omega((x,H_0^{\star}x),(y,H_0^{\star}y)) = \omega(F(x,H_0^{\star}x),F(y,H_0^{\star}y))
\end{equation}
holds for all $x$, $y\in D(H_0^{\star})$. By \cite[Examples 2.7(b) and (c)]{BS} the \textit{standard symmetric form} on $\mathcal{H}\oplus\mathcal{H}$ is given by $\Omega((x,y),(u,v))=(x,v)_{\mathcal{H}}+(y,u)_{\mathcal{H}}$ and the \textit{standard unitary form} on $\mathcal{G}_1\oplus\mathcal{G}_2$ is given by $\omega((x,y),(u,v))=(x,u)_{\mathcal{G}_1}-(y,v)_{\mathcal{G}_2}$. This choice of a boundary system leads to the following characterization of extensions, see \cite[Theorem 3.6]{BS}.

\smallskip

\begin{thmA} Let $H_0$ be skew-symmetric and let $(\Omega,\mathcal{G}_1,\mathcal{G}_2,F,\omega)$ be a boundary system for $H_0$, where $\Omega$ is the standard symmetric form and $\omega$ is the standard unitary form. Then the operator $H\subseteq H_0^{\star}$ is skew-self-adjoint if and only if there exists a unitary operator $L\colon\mathcal{G}_1\rightarrow\mathcal{G}_2$ such that $D(H)=\{x\in D(H_0^{\star})\:;\:LF_1x=F_2x\}$. Here, $F_i\colon D(H_0^{\star})\rightarrow \mathcal{G}_i$ is given by $F_ix=\pr_i(F(x,H_0^{\star}x))$ for $i\in\{1,2\}$, where $\pr_i\colon\mathcal{G}_1\oplus\mathcal{G}_2\rightarrow\mathcal{G}_i$ denotes the canonical projection.
\end{thmA}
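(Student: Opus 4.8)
The plan is to establish both implications after recording the explicit shape of \eqref{BS-1} and two elementary facts about the boundary maps $F_i$. Writing out the two standard forms, \eqref{BS-1} becomes the Green-type identity
\[
(x,H_0^{\star}y)_{\mathcal{H}}+(H_0^{\star}x,y)_{\mathcal{H}}=(F_1x,F_1y)_{\mathcal{G}_1}-(F_2x,F_2y)_{\mathcal{G}_2}
\qquad(x,y\in D(H_0^{\star})).
\]
First I would pin down $\ker F$. If $Fx=0$, the right-hand side vanishes for every $y$, so $(H_0^{\star}x,y)_{\mathcal{H}}=(x,-H_0^{\star}y)_{\mathcal{H}}$ for all $y\in D(H_0^{\star})$, and since $(-H_0^{\star})^{\star}=-H_0$ this forces $x\in D(H_0)$. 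Conversely, for $x\in D(H_0)$ one has $H_0^{\star}x=-H_0x$ and $(H_0x,y)_{\mathcal{H}}=(x,H_0^{\star}y)_{\mathcal{H}}$, so the left-hand side above vanishes for all $y$; as $F$ is surjective and $\omega$ is non-degenerate, $Fx=0$. Hence $\{x\in D(H_0^{\star}):Fx=0\}=D(H_0)$. It follows that any operator $H=H_0^{\star}|_{D(H)}$ with $D(H)=\{x\in D(H_0^{\star}):LF_1x=F_2x\}$, $L$ unitary, contains $-H_0$ (since $\ker F\subseteq D(H)$ and $H_0^{\star}|_{D(H_0)}=-H_0$), whence $H^{\star}\subseteq-H_0^{\star}$; likewise every skew-self-adjoint $H\subseteq H_0^{\star}$ contains $-H_0$ because $H_0=(H_0^{\star})^{\star}\subseteq H^{\star}=-H$.

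\emph{Sufficiency.} Given a unitary $L\colon\mathcal{G}_1\to\mathcal{G}_2$, set $H=H_0^{\star}|_{D(H)}$ with $D(H)=\{x\in D(H_0^{\star}):LF_1x=F_2x\}$. For $x,y\in D(H)$ the Green identity together with $F_2=LF_1$ on $D(H)$ gives $(Hx,y)_{\mathcal{H}}+(x,Hy)_{\mathcal{H}}=(F_1x,F_1y)_{\mathcal{G}_1}-(LF_1x,LF_1y)_{\mathcal{G}_2}=0$, so $H\subseteq-H^{\star}$. For the converse inclusion take $z\in D(H^{\star})$; by the preliminaries $z\in D(H_0^{\star})$ and $H^{\star}z=-H_0^{\star}z$. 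Inserting $z$ into the Green identity against an arbitrary $x\in D(H)$ and using $H_0^{\star}x=Hx$ and $(Hx,z)_{\mathcal{H}}=(x,H^{\star}z)_{\mathcal{H}}$, the $\mathcal{H}$-terms cancel and one is left with $(F_1x,F_1z)_{\mathcal{G}_1}=(F_2x,F_2z)_{\mathcal{G}_2}=(F_1x,L^{\star}F_2z)_{\mathcal{G}_1}$ for all $x\in D(H)$. Since $F_1$ maps $D(H)$ onto $\mathcal{G}_1$ — for $g\in\mathcal{G}_1$ surjectivity of $F$ produces $x$ with $Fx=(g,Lg)$, and then $x\in D(H)$ with $F_1x=g$ — we conclude $F_1z=L^{\star}F_2z$, i.e.\ $LF_1z=F_2z$, so $z\in D(H)$; together with $H^{\star}z=-H_0^{\star}z=-Hz$ this gives $-H^{\star}\subseteq H$, and hence $H=-H^{\star}$.

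\emph{Necessity.} Let $H\subseteq H_0^{\star}$ be skew-self-adjoint. For $x,y\in D(H)$ the Green identity yields $(F_1x,F_1y)_{\mathcal{G}_1}=(F_2x,F_2y)_{\mathcal{G}_2}$, so $\norm{F_1x}=\norm{F_2x}$ and the assignment $L_0\colon F_1(D(H))\to F_2(D(H))$, $L_0(F_1x):=F_2x$, is a well-defined, linear, surjective isometry. Next I would show $F_1(D(H))$ is dense in $\mathcal{G}_1$: if $g\perp F_1(D(H))$, choose by surjectivity of $F$ some $z\in D(H_0^{\star})$ with $Fz=(g,0)$; the Green identity against $x\in D(H)$ then collapses to $(Hx,z)_{\mathcal{H}}=(x,-H_0^{\star}z)_{\mathcal{H}}$, so $z\in D(H^{\star})=D(H)$, whence $g=F_1z\in F_1(D(H))$ and therefore $g=0$. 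The same argument with $Fz=(0,g)$ shows $F_2(D(H))$ is dense in $\mathcal{G}_2$. Consequently $L_0$ extends uniquely to an isometry $L\colon\mathcal{G}_1\to\mathcal{G}_2$ whose range is closed and contains the dense set $F_2(D(H))$, hence $L$ is unitary. Finally let $\widetilde{H}=H_0^{\star}|_{\widetilde{D}}$ with $\widetilde{D}=\{x\in D(H_0^{\star}):LF_1x=F_2x\}$: by the sufficiency part $\widetilde{H}$ is skew-self-adjoint, and $H\subseteq\widetilde{H}$ since $L$ restricted to $F_1(D(H))$ equals $L_0$; as $H\subseteq\widetilde{H}$ forces $\widetilde{H}^{\star}\subseteq H^{\star}$, i.e.\ $\widetilde{H}\subseteq H$, we obtain $H=\widetilde{H}$, hence $D(H)=\widetilde{D}$, which is the asserted description.

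\emph{Main obstacle.} The one genuinely delicate point is the density of $F_1(D(H))$ and $F_2(D(H))$ in the necessity part: because $F$ is not assumed continuous for the graph norm on $\gr(H_0^{\star})$, this cannot be read off from closedness of $D(H)$, and must instead be extracted from the maximality of $H$ via $H^{\star}=-H$. Everything else is routine bookkeeping with the Green identity, the surjectivity of $F$, and the fact that a skew-self-adjoint operator has no proper skew-self-adjoint extension.
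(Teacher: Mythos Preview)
Your proof is correct and self-contained. Note, however, that the paper does \emph{not} supply its own proof of Theorem~A: the result is quoted from \cite[Theorem~3.6]{BS}. What the paper does instead (Proposition~\ref{LEM-1} and Theorem~\ref{THM}) is show that \emph{once a unitary $L_0\colon\mathcal{G}_1\to\mathcal{G}_2$ is already available}, the boundary system can be transformed into a boundary triplet $(\mathcal{G}_1,\Gamma_1,\Gamma_2)$, after which Theorem~A is read off from Theorem~B, the latter being proved in the Appendix via the Cayley-type bijection $\Phi$ between maximal dissipative extensions and contractions. Your argument takes a genuinely different route: you work directly with the Green identity $(x,H_0^\star y)+(H_0^\star x,y)=(F_1x,F_1y)-(F_2x,F_2y)$ and never pass through boundary triplets. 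The key step that has no counterpart in the paper is your density argument for $F_1(D(H))$ and $F_2(D(H))$, which manufactures the unitary $L$ out of the skew-self-adjointness of $H$ itself rather than presupposing some unitary $L_0$ at the outset; this makes the ``only if'' direction logically cleaner and avoids the apparent circularity of the paper's reduction. The price is that you must handle the partial isometry $L_0$ and its extension by hand, whereas the paper offloads this to the already-established machinery of \cite{GG,BT}.
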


\smallskip

Boundary systems generalize the classical concept of boundary triplets on which there exists a considerable amount of literature. We follow the approach of Gorbachuk et al.~\cite{GG} but refer to Wegner \cite{BT} where the skew-symmetric version of the latter results has been outlined explicitly. The notion of boundary triples, as used e.g.~by Br\"uning et al.~\cite{G} coincides with the original, i.e., symmetric, version of boundary triplets in the sense of \cite{GG}. In the remainder, we say that a triple $(\mathcal{G},\Gamma_1,\Gamma_2)$ is a \textit{boundary triplet} for $H_0$ if $\mathcal{G}$ is a Hilbert space and $\Gamma_1$, $\Gamma_2\colon D(-H_0^{\star})\rightarrow \mathcal{G}$ are linear maps such that the map $D(-H_0^{\star})\rightarrow \mathcal{G}\oplus\mathcal{G}$, $x\mapsto (\Gamma_1x,\Gamma_2x)$ is surjective and such that
\begin{equation}\label{BT-1}
(H_0^{\star}x,y)_\mathcal{H}+(x,H_0^{\star}y)_\mathcal{H}=(\Gamma_1x,\Gamma_2y)_\mathcal{G}+(\Gamma_2x,\Gamma_1y)_\mathcal{G}
\end{equation}
holds for all $x$, $y\in D(-H_0^{\star})$. With this notation, the skew-symmetric version of \cite[Theorem III.1.6]{GG}, see \cite[Theorem 4.2]{BT}, reads as follows.

\smallskip

\begin{thmB} Let $H_0$ be skew-symmetric and let $(\mathcal{G},\Gamma_1,\Gamma_2)$ be a boundary triplet for $H_0$. Then $H\supseteq H_0$ is skew-self-adjoint if and only if there exists a unitary operator $L\colon\mathcal{G}\rightarrow\mathcal{G}$ such that $D(H)=\{x\in D(-H_0^{\star})\:;\:(L-1)\Gamma_1x+(L+1)\Gamma_2x=0\}$ and $H=-H_0^{\star}|_{D(H)}$.
\end{thmB}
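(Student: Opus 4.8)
Since the paper has just set up the two frameworks and stated Theorem~A, the efficient plan is to \emph{deduce Theorem~B from Theorem~A} by realising the given boundary triplet as a boundary system with the standard forms. The underlying point is that the Green identity \eqref{BT-1} and the boundary-system identity \eqref{BS-1} (with $\Omega$ the standard symmetric form and $\omega$ the standard unitary form) differ only by the linear change of variables on $\mathcal{G}\oplus\mathcal{G}$ that diagonalises the boundary form $[(a,b),(c,d)]:=(a,d)_{\mathcal{G}}+(b,c)_{\mathcal{G}}$ appearing on the right of \eqref{BT-1} into the indefinite form of $\omega$.

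Concretely, given a boundary triplet $(\mathcal{G},\Gamma_1,\Gamma_2)$ for $H_0$, set $F_1x:=\tfrac{1}{\sqrt2}(\Gamma_1x+\Gamma_2x)$, $F_2x:=\tfrac{1}{\sqrt2}(\Gamma_1x-\Gamma_2x)$ for $x\in D(H_0^{\star})$, and $F\colon\gr(H_0^{\star})\to\mathcal{G}\oplus\mathcal{G}$, $(x,H_0^{\star}x)\mapsto(F_1x,F_2x)$. First I would check that $(\Omega,\mathcal{G},\mathcal{G},F,\omega)$ is a boundary system for $H_0$ with $\Omega$ the standard symmetric and $\omega$ the standard unitary form: surjectivity of $F$ follows from surjectivity of $x\mapsto(\Gamma_1x,\Gamma_2x)$ together with invertibility of the linear map $(a,b)\mapsto\tfrac{1}{\sqrt2}(a+b,a-b)$ on $\mathcal{G}\oplus\mathcal{G}$, while \eqref{BS-1} is the elementary expansion
\[
\omega\bigl(F(x,H_0^{\star}x),F(y,H_0^{\star}y)\bigr)=(F_1x,F_1y)_{\mathcal{G}}-(F_2x,F_2y)_{\mathcal{G}}=(\Gamma_1x,\Gamma_2y)_{\mathcal{G}}+(\Gamma_2x,\Gamma_1y)_{\mathcal{G}},
\]
which by \eqref{BT-1} equals $(H_0^{\star}x,y)_{\mathcal{H}}+(x,H_0^{\star}y)_{\mathcal{H}}=\Omega\bigl((x,H_0^{\star}x),(y,H_0^{\star}y)\bigr)$.

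Next I would invoke Theorem~A for this boundary system: the skew-self-adjoint $H\subseteq H_0^{\star}$ are exactly $H_0^{\star}|_{D(H)}$ with $D(H)=\{x\in D(H_0^{\star})\;;\;LF_1x=F_2x\}$ for some unitary $L\colon\mathcal{G}\to\mathcal{G}$, and cancelling $\tfrac{1}{\sqrt2}$ turns $LF_1x=F_2x$ into $L(\Gamma_1x+\Gamma_2x)=\Gamma_1x-\Gamma_2x$, i.e.\ $(L-1)\Gamma_1x+(L+1)\Gamma_2x=0$. Then I would pass from restrictions of $H_0^{\star}$ to restrictions of $-H_0^{\star}$: by the remarks at the beginning of Section~\ref{SEC:1}, $H\supseteq H_0$ is skew-self-adjoint if and only if $-H\subseteq H_0^{\star}$ is skew-self-adjoint, and since $H$ and $-H$ have the same domain this converts the above into the statement of Theorem~B, the clause $H=-H_0^{\star}|_{D(H)}$ being exactly the record of this sign flip (note $D(H_0^{\star})=D(-H_0^{\star})$).

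There is no essential obstacle on this route; care is only needed in matching the sign conventions of $\Omega$, $\omega$ and \eqref{BT-1} under the diagonalising substitution, and in the $H\leftrightarrow-H$ bookkeeping. For completeness, a self-contained alternative avoiding Theorem~A would first note that, by \eqref{BT-1}, the map $(\Gamma_1,\Gamma_2)$ has kernel $D(H_0)$, so operators with $H_0\subseteq H\subseteq-H_0^{\star}$ correspond bijectively to subspaces $\Theta\subseteq\mathcal{G}\oplus\mathcal{G}$; a direct computation of $H^{\star}$ via \eqref{BT-1} identifies the subspace of $-H^{\star}$ with the annihilator of $\Theta$ for $[\cdot,\cdot]$, whence $H=-H^{\star}$ iff $\Theta$ is Lagrangian, and the substitution above identifies the Lagrangians with the spaces $\{(a,b)\;;\;L(a+b)=a-b\}$, $L$ unitary. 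There the real work is the adjoint computation, in which one must establish $D(H^{\star})\subseteq D(-H_0^{\star})$ before \eqref{BT-1} becomes applicable.
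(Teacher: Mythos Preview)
Your proposal is correct, but it takes a different route from the paper's own proof in Section~\ref{SEC:3}. What you do is essentially the content of the paper's Proposition~\ref{PROP-0}: you build the boundary system $(\Omega,\mathcal{G},\mathcal{G},F,\omega)$ from the triplet via the diagonalising substitution $F_i=\tfrac{1}{\sqrt{2}}(\Gamma_1\pm\Gamma_2)$, apply Theorem~A as a black box, and then undo the sign via $H\leftrightarrow -H$. The paper records exactly this reduction in Proposition~\ref{PROP-0}, but its \emph{proof} of Theorem~B in the appendix deliberately avoids Theorem~A: it instead invokes the bijection $\Phi$ from \cite[Proposition~4.8]{BT} between maximal dissipative extensions of $H_0$ and contractions on $\mathcal{G}$, given by $\Phi(H)\colon\Gamma_1x+\Gamma_2x\mapsto\Gamma_1x-\Gamma_2x$, and then argues directly that $H=-H^{\star}$ if and only if $\Phi(H)$ is unitary, by computing $D(-H^{\star})$ via the Green identity \eqref{BT-1} and comparing with the isometry condition on $\Phi(H)$. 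Your approach is shorter once Theorem~A is granted, and makes the equivalence of the two parametrizations transparent; the paper's approach is self-contained relative to the boundary-triplet survey \cite{BT}, stays entirely within the triplet framework, and exhibits the Cayley-type nature of the map $L\leftrightarrow H$ explicitly. The alternative you sketch at the end (Lagrangian subspaces for the form $[\cdot,\cdot]$) is closer in spirit to the paper's argument, since the paper's condition \eqref{EQ2} is precisely the isotropy condition for that form.
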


\smallskip

Let a skew-symmetric operator be given. In Section \ref{SEC:2} we show that then every boundary triplet induces a boundary system in a natural way. If our operator admits a boundary system that induces at least one skew-self-adjoint extension, then it induces a boundary triplet in a natural way. The parametrization of skew-self-adjoint operators via the boundary system (Theorem~A) is then induced by the parametrization via the boundary triplet (Theorem~B), and vice versa, in a simple way. In Section \ref{SEC:2a} we construct for every skew-symmetric operator---thus in particular also for operators that have no skew-self-adjoint extensions---a natural boundary system which leads to an explicit description of at least one maximal dissipative extension. In addition, this boundary system yields an independent proof of the fact that the equality of deficiency indices is equivalent to the existence of skew-self-adjoint extensions. For the convenience of the reader we give a short and direct proof for Theorem B based on the survey \cite{BT} in Section \ref{SEC:3}.

\medskip

\section{Boundary triplets are boundary systems are boundary triplets---or not?}\label{SEC:2}

\smallskip

At first sight, boundary triplets appear to be the special case of boundary systems with $\mathcal{G}_1=\mathcal{G}_2=\mathcal{G}$ and $F_i=\Gamma_i$ for $i\in\{1,2\}$. In \cite[Remark 3.2(a)]{BS} this is pointed out, but there $\Omega$ and $\omega$ are both the standard skew-symmetric forms, and the boundary triplet, that one obtains in the special case, corresponds to the symmetric situation considered in \cite{GG}. For the skew-symmetric case the aforementioned intuition is also correct, but the result is a bit more technical.

\smallskip

\begin{prop}\label{PROP-0} Let $H_0$ be skew-symmetric and let $(\mathcal{G},\Gamma_1,\Gamma_2)$ be a boundary triplet for $H_0$. Then, $(\Omega,\mathcal{G},\mathcal{G},F,\omega)$ is a boundary system for $H_0$, where $\Omega$ is the standard symmetric form, $\omega$ is the standard unitary form, and $F\colon\gr(H_0^{\star})\rightarrow\mathcal{G}\oplus\mathcal{G}$ is given by
\begin{equation}\label{QQ}
F(x,H_0^{\star}x)=\Bigl({\frac{1}{\sqrt{2}}}(\Gamma_1x+\Gamma_2x),{\frac{1}{\sqrt{2}}}(\Gamma_1x-\Gamma_2x)\Bigr).
\end{equation}
The characterization of skew-self-adjoint extensions of $H_0$ via the boundary triplet $(\mathcal{G},\Gamma_1,\Gamma_2)$, given in Theorem B, arises from the characterization via the boundary system $(\Omega,\mathcal{G},\mathcal{G},F,\omega)$, given in Theorem A, by multiplication with $-1$.
\end{prop}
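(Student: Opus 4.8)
The plan is to verify the two assertions of the proposition separately: first that $(\Omega,\mathcal{G},\mathcal{G},F,\omega)$ is indeed a boundary system, and second that the two parametrizations of skew-self-adjoint extensions correspond via multiplication by $-1$. For the first assertion I must check that $F$ is surjective onto $\mathcal{G}\oplus\mathcal{G}$ and that the compatibility identity \eqref{BS-1} holds. Surjectivity of $F$ is immediate: the map $x\mapsto(\Gamma_1 x,\Gamma_2 x)$ is surjective onto $\mathcal{G}\oplus\mathcal{G}$ by the definition of a boundary triplet, and the linear change of coordinates $(\xi,\eta)\mapsto\frac{1}{\sqrt2}(\xi+\eta,\xi-\eta)$ is a bijection of $\mathcal{G}\oplus\mathcal{G}$; composing gives surjectivity of $F$. (Note $F$ is well defined because $H_0^\star$ is single-valued, so $x\in D(H_0^\star)$ determines $(x,H_0^\star x)$ and hence $\Gamma_i x$.)

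For the compatibility identity, I would simply expand both sides. On the left, using the convention that the standard symmetric form $\Omega$ on $\mathcal{H}\oplus\mathcal{H}$ evaluated at $((x,H_0^\star x),(y,H_0^\star y))$ is $(x,H_0^\star y)_\mathcal{H}+(H_0^\star x,y)_\mathcal{H}$, which by \eqref{BT-1} equals $(\Gamma_1 x,\Gamma_2 y)_\mathcal{G}+(\Gamma_2 x,\Gamma_1 y)_\mathcal{G}$. On the right, writing $F(x,H_0^\star x)=(a_1,a_2)$ with $a_1=\frac{1}{\sqrt2}(\Gamma_1 x+\Gamma_2 x)$, $a_2=\frac{1}{\sqrt2}(\Gamma_1 x-\Gamma_2 x)$, and similarly $(b_1,b_2)$ for $y$, the standard unitary form gives $\omega((a_1,a_2),(b_1,b_2))=(a_1,b_1)_\mathcal{G}-(a_2,b_2)_\mathcal{G}$. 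Plugging in and using the polarization-type identity $\frac12\bigl[(\xi_1+\xi_2,\eta_1+\eta_2)-(\xi_1-\xi_2,\eta_1-\eta_2)\bigr]=(\xi_1,\eta_2)+(\xi_2,\eta_1)$ with $\xi_i=\Gamma_i x$, $\eta_i=\Gamma_i y$ yields exactly $(\Gamma_1 x,\Gamma_2 y)_\mathcal{G}+(\Gamma_2 x,\Gamma_1 y)_\mathcal{G}$, matching the left-hand side. This is a routine computation but must be done with care regarding which slot of each sesquilinear form is conjugate-linear; I would fix that convention once at the start and stick to it.

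For the second assertion, I would take a unitary $L\colon\mathcal{G}\to\mathcal{G}$ and compare the two domain descriptions. From Theorem~B, the skew-self-adjoint extension associated with $L$ has domain $\{x\in D(-H_0^\star):(L-1)\Gamma_1 x+(L+1)\Gamma_2 x=0\}$. From Theorem~A applied to the boundary system above, with $F_1 x=\frac{1}{\sqrt2}(\Gamma_1 x+\Gamma_2 x)$ and $F_2 x=\frac{1}{\sqrt2}(\Gamma_1 x-\Gamma_2 x)$, the extension associated with a unitary $L'\colon\mathcal{G}\to\mathcal{G}$ has domain $\{x:L' F_1 x=F_2 x\}$, i.e. $L'(\Gamma_1 x+\Gamma_2 x)=\Gamma_1 x-\Gamma_2 x$, equivalently $(L'-1)\Gamma_1 x+(L'+1)\Gamma_2 x=0$. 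Comparing, the Theorem~A condition with parameter $L'$ is literally the Theorem~B condition with parameter $L'$ — but one acts on $D(H_0^\star)$ and the other on $D(-H_0^\star)$, which are the same set, while the operators are $H_0^\star|_{D(H)}$ versus $-H_0^\star|_{D(H)}$ respectively. Hence the extension $H$ produced by the boundary system (a restriction of $H_0^\star$) and the extension $\widetilde H$ produced by the boundary triplet (a restriction of $-H_0^\star$) with the same $L$ satisfy $H=-\widetilde H$ on the common domain; this is the claimed "multiplication by $-1$".

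The main obstacle I anticipate is bookkeeping rather than conceptual difficulty: keeping straight the three sign/conjugation conventions in play — the definition of skew-self-adjointness ($H=-H^\star$) versus self-adjointness, the fact that Theorem~A parametrizes restrictions of $H_0^\star$ while Theorem~B parametrizes restrictions of $-H_0^\star$, and the precise conjugate-linear slots in $\Omega$, $\omega$, and the inner products. Once the conventions are pinned down, both parts reduce to short linear-algebra identities. I would therefore devote the first sentence of the proof to fixing notation ($D(-H_0^\star)=D(H_0^\star)$ as sets; the sesquilinearity convention), then carry out the two expansions, and finally state the $L\leftrightarrow L$ correspondence and observe that it intertwines $H_0^\star$ with $-H_0^\star$, which is precisely the asserted sign flip.
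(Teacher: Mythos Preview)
Your proposal is correct and follows essentially the same route as the paper's proof: verify surjectivity of $F$ via the bijective change of coordinates $(\xi,\eta)\mapsto\frac{1}{\sqrt2}(\xi+\eta,\xi-\eta)$, expand both sides of \eqref{BS-1} using \eqref{BT-1} and the definitions of $\Omega$ and $\omega$, and then observe that $LF_1x=F_2x$ is equivalent to $(L-1)\Gamma_1x+(L+1)\Gamma_2x=0$, so the domains in Theorems~A and~B coincide while the operators are $H_0^\star|_{D(H)}$ and $-H_0^\star|_{D(H)}$ respectively. The only cosmetic difference is that the paper writes out the polarization-type expansion term by term, whereas you invoke it as an identity.
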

\begin{proof} Firstly, we note that according to the comments behind \cite[Definition 2.3]{BT} the domain $D(-H_0^{\star})$ is equal to $D(H_0^{\star})$. For $x,y\in D(H_0^{\star})$ we use \eqref{QQ} and \eqref{BT-1} to compute
\begin{eqnarray*}
\omega(F(x,H_0^{\star}x),F(y,H_0^{\star}y)) & = & \omega\Bigl(\Bigl({\frac{\Gamma_1x+\Gamma_2x}{\sqrt{2}}},{\frac{\Gamma_1x-\Gamma_2x}{\sqrt{2}}}\Bigr),\Bigl({\frac{\Gamma_1y+\Gamma_2y}{\sqrt{2}}},{\frac{\Gamma_1y-\Gamma_2y}{\sqrt{2}}}\Bigr)\Bigr)\\
& = & \Bigl({\frac{\Gamma_1x+\Gamma_2x}{\sqrt{2}}},{\frac{\Gamma_1y+\Gamma_2y}{\sqrt{2}}}\Bigr)_{\mathcal{G}}-\Bigl({\frac{\Gamma_1x-\Gamma_2x}{\sqrt{2}}},{\frac{\Gamma_1y-\Gamma_2y}{\sqrt{2}}}\Bigr)_{\mathcal{G}}\\
& = & {\frac{1}{2}}\bigl[ (\Gamma_1x,\Gamma_1y)_{\mathcal{G}}+(\Gamma_1x,\Gamma_2y)_{\mathcal{G}}+(\Gamma_2x,\Gamma_1y)_{\mathcal{G}}+(\Gamma_2x,\Gamma_2y)_{\mathcal{G}}\\
& & \hspace{18pt}-(\Gamma_1x,\Gamma_1y)_{\mathcal{G}}+(\Gamma_1x,\Gamma_2y)_{\mathcal{G}}+(\Gamma_2x,\Gamma_1y)_{\mathcal{G}}-(\Gamma_2x,\Gamma_2y)_{\mathcal{G}}\bigr]\\
& = & (\Gamma_1x,\Gamma_2y)_{\mathcal{G}}+(\Gamma_2x,\Gamma_1y)_{\mathcal{G}}\phantom{\frac{1}{2}}\\
& = & (H_0^{\star}x,y)_\mathcal{H}+(x,H_0^{\star}y)_\mathcal{H}\\
& = & \Omega((x,H_0^{\star}x),(y,H_0^{\star}y))\phantom{\frac{1}{2}}
\end{eqnarray*}
which shows \eqref{BS-1}. Let $(y_1,y_2)\in\mathcal{G}\oplus\mathcal{G}$ be given. As $D(-H_0^{\star})\rightarrow \mathcal{G}\oplus\mathcal{G}$, $x\mapsto (\Gamma_1x,\Gamma_2x)$ is surjective, we find $x\in D(H_0^{\star})$ such that $\Gamma_1x=\frac{1}{\sqrt{2}}(y_1+y_2)$ and $\Gamma_2x=\frac{1}{\sqrt{2}}(y_1-y_2)$ hold. Now we compute
$$
F(x,H_0^{\star}x)=\Bigl(\frac{1}{\sqrt{2}}\Bigl(\frac{1}{\sqrt{2}}(y_1+y_2)+\frac{1}{\sqrt{2}}(y_1-y_2)\Bigr),\frac{1}{\sqrt{2}}\Bigl( \frac{1}{\sqrt{2}}(y_1+y_2)-\frac{1}{\sqrt{2}}(y_1-y_2) \Bigr)\Bigr)=(y_1,y_2)
$$
which shows that $F$ is surjective. For the final statement it is enough to observe that \eqref{QQ} implies that $F_1x=\frac{1}{\sqrt{2}}(\Gamma_1x+\Gamma_2x)$ and $F_2x=\frac{1}{\sqrt{2}}(\Gamma_1x-\Gamma_2x)$ hold and that we therefore have
$$
LF_1x=F_2x\;\Longleftrightarrow\;(L-1)\Gamma_1x+(L+1)\Gamma_2x=0
$$
for $x\in D(H_0^{\star})$. This shows that the domains given in Theorem A and Theorem B coincide. Our remarks at the beginning of Section \ref{SEC:1} conclude the proof.
\end{proof}

\smallskip

Our next result is the converse of Proposition \ref{PROP-0} but below we have to assume explicitly that the initial operator $H_0$ has at least one skew-self-adjoint extension. Notice however that this was implicitly also assumed in Proposition \ref{PROP-0}, as the existence of a boundary triplet guarantees that there exist skew-self-adjoint extensions, see Section \ref{SEC:2a}.

\smallskip

\begin{prop}\label{LEM-1} Let $(\Omega,\mathcal{G}_1,\mathcal{G}_2,F,\omega)$ be a boundary system for $H_0$, where $\Omega$ is the standard symmetric form and $\omega$ is the standard unitary form. Let $L_0\colon\mathcal{G}_1\rightarrow\mathcal{G}_2$ be a unitary operator and let $\Gamma_1,\Gamma_2\colon D(-H_0^{\star})\rightarrow\mathcal{G}_1$ be defined via
\begin{equation}\label{Dfn-Gamma}
\Gamma_1x=\frac{1}{\sqrt{2}}(F_1x+L_0^{-1}F_2x)\;\text{ and }\;\Gamma_2x=\frac{1}{\sqrt{2}}(F_1x-L_0^{-1}F_2x).
\end{equation}
Then $(\mathcal{G}_1,\Gamma_1,\Gamma_2)$ is a boundary triplet for $H_0$.
\end{prop}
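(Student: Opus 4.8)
The goal is to verify the two defining conditions of a boundary triplet for the triple $(\mathcal{G}_1,\Gamma_1,\Gamma_2)$: first, the Green-type identity \eqref{BT-1}, and second, the surjectivity of $x\mapsto(\Gamma_1x,\Gamma_2x)$ onto $\mathcal{G}_1\oplus\mathcal{G}_1$. The plan is to mimic the computation in the proof of Proposition \ref{PROP-0}, but run it in reverse: there one passed from $(\Gamma_1,\Gamma_2)$ to $(F_1,F_2)$ via an orthogonal substitution, and here one passes back, with the twist that the unitary $L_0$ is inserted so that $F_1$ and $F_2$, which a priori take values in \emph{different} Hilbert spaces $\mathcal{G}_1$ and $\mathcal{G}_2$, are combined consistently: $L_0^{-1}F_2x\in\mathcal{G}_1$, so both $\Gamma_ix$ land in $\mathcal{G}_1$ as required.

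First I would establish \eqref{BT-1}. Fix $x,y\in D(-H_0^\star)=D(H_0^\star)$. Expanding $(\Gamma_1x,\Gamma_2y)_{\mathcal{G}_1}+(\Gamma_2x,\Gamma_1y)_{\mathcal{G}_1}$ using \eqref{Dfn-Gamma}, the cross terms telescope exactly as in Proposition \ref{PROP-0}, leaving $(F_1x,F_1y)_{\mathcal{G}_1}-(L_0^{-1}F_2x,L_0^{-1}F_2y)_{\mathcal{G}_1}$. Since $L_0$ is unitary, $(L_0^{-1}F_2x,L_0^{-1}F_2y)_{\mathcal{G}_1}=(F_2x,F_2y)_{\mathcal{G}_2}$, so this equals $\omega(F(x,H_0^\star x),F(y,H_0^\star y))$ with $\omega$ the standard unitary form. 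By the boundary system identity \eqref{BS-1} this is $\Omega((x,H_0^\star x),(y,H_0^\star y))=(x,H_0^\star y)_{\mathcal{H}}+(H_0^\star x,y)_{\mathcal{H}}$, which is precisely the left-hand side of \eqref{BT-1}. This step is routine.

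For surjectivity, given $(z_1,z_2)\in\mathcal{G}_1\oplus\mathcal{G}_1$, I want $x$ with $\Gamma_1x=z_1$, $\Gamma_2x=z_2$; inverting the orthogonal $2\times2$ substitution, this asks for $F_1x=\tfrac{1}{\sqrt2}(z_1+z_2)$ and $L_0^{-1}F_2x=\tfrac{1}{\sqrt2}(z_1-z_2)$, i.e. $F_2x=\tfrac{1}{\sqrt2}L_0(z_1-z_2)$. So I need an $x\in D(H_0^\star)$ with $F(x,H_0^\star x)=\bigl(\tfrac{1}{\sqrt2}(z_1+z_2),\tfrac{1}{\sqrt2}L_0(z_1-z_2)\bigr)\in\mathcal{G}_1\oplus\mathcal{G}_2$, and such an $x$ exists because $F$ is surjective by the definition of a boundary system. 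Then $\Gamma_1x$ and $\Gamma_2x$ come out as $z_1$ and $z_2$ by a short back-substitution.

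The only genuinely delicate point is bookkeeping about which space each vector lives in: $F_2x\in\mathcal{G}_2$ while $\Gamma_ix\in\mathcal{G}_1$, so one must be careful to apply $L_0^{-1}$ before forming inner products and to apply $L_0$ before invoking surjectivity of $F$. Beyond that, $L_0$ never needs to be $L$ itself—any fixed unitary $\mathcal{G}_1\to\mathcal{G}_2$ works, and its existence is exactly the hypothesis (inherited from the assumption that $H_0$ admits a skew-self-adjoint extension, which by Theorem A forces $\mathcal{G}_1\cong\mathcal{G}_2$). I would close by remarking that the triplet so obtained, fed back through Proposition \ref{PROP-0}, recovers the original boundary system up to the identification $L\mapsto L_0^{-1}L$, so that Theorems A and B match as before; but that observation is not needed for the statement as phrased and can be omitted or deferred.
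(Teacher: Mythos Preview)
Your proposal is correct and follows essentially the same route as the paper's proof: the paper also inverts the substitution \eqref{Dfn-Gamma} to express $F_1x$ and $L_0^{-1}F_2x$ in terms of $\Gamma_1x,\Gamma_2x$, uses the unitarity of $L_0$ to pass between $(F_2x,F_2y)_{\mathcal{G}_2}$ and $(L_0^{-1}F_2x,L_0^{-1}F_2y)_{\mathcal{G}_1}$, and handles surjectivity by choosing exactly the same preimage $\bigl(\tfrac{1}{\sqrt2}(z_1+z_2),\tfrac{1}{\sqrt2}L_0(z_1-z_2)\bigr)$ under $F$. The only cosmetic difference is that the paper runs the chain of equalities for \eqref{BT-1} starting from $(x,H_0^\star y)_{\mathcal{H}}+(H_0^\star x,y)_{\mathcal{H}}$ rather than from $(\Gamma_1x,\Gamma_2y)_{\mathcal{G}_1}+(\Gamma_2x,\Gamma_1y)_{\mathcal{G}_1}$.
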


\begin{proof} We observe that the definitions in \eqref{Dfn-Gamma} yield the two equations
\begin{eqnarray*}
F_1x &=& \frac{1}{2}(F_1x+L_0^{-1}F_2x+F_1x-L_0^{-1}F_2x) = \frac{1}{2}(\sqrt{2}\,\Gamma_1x+\sqrt{2}\,\Gamma_2x) = \frac{1}{\sqrt{2}}(\Gamma_1x+\Gamma_2x)\\
L_0^{-1}F_2x&=&\frac{1}{2}(F_1x+L_0^{-1}F_2x-F_1x+L_0^{-1}F_2x) = \frac{1}{2}(\sqrt{2}\,\Gamma_1x-\sqrt{2}\,\Gamma_2x) = \frac{1}{\sqrt{2}}(\Gamma_1x-\Gamma_2x).
\end{eqnarray*}
The above together with \eqref{BS-1} and the definitions of the standard symmetric resp.~unitary form yield
\begin{eqnarray*}
\bigl(x,H_0^{\star}y\bigr)_{\mathcal{H}}+\bigl(H_0^{\star}x,y\bigr)_{\mathcal{H}}
& = & \Omega\bigl((x,H_0^{\star}x),(y,H_0^{\star}y)\bigr)\\
& = & \omega\bigl(F(x,H_0^{\star}x),F(y,H_0^{\star}y)\bigr)\phantom{\sum}\\
& = & (F_1x,F_1y\bigr)_{\mathcal{G}_1}-\bigl(F_2x,F_2y)_{\mathcal{G}_2}\\
& = & (F_1x,F_1y)_{\mathcal{G}_1}-(L_0^{-1}F_2x,L_0^{-1}F_2y)_{\mathcal{G}_1}\\
& = & \Bigl({\frac{\Gamma_1x+\Gamma_2x}{\sqrt{2}}},{\frac{\Gamma_1y+\Gamma_2y}{\sqrt{2}}}\Bigr)_{\mathcal{G}_1} - \Bigl({\frac{\Gamma_1x-\Gamma_2x}{\sqrt{2}}},{\frac{\Gamma_1y-\Gamma_2y}{\sqrt{2}}}\Bigr)_{\mathcal{G}_1}\\
& = & {\frac{1}{2}}\bigl[(\Gamma_1x,\Gamma_1y)_{\mathcal{G}_1}+(\Gamma_1x,\Gamma_2y)_{\mathcal{G}_1}+(\Gamma_2x,\Gamma_1y)_{\mathcal{G}_1}+(\Gamma_2x,\Gamma_2y)_{\mathcal{G}_1}\\
& & \hspace{18pt}-(\Gamma_1x,\Gamma_1y)_{\mathcal{G}_1}+(\Gamma_1x,\Gamma_2y)_{\mathcal{G}_1}+(\Gamma_2x,\Gamma_1y)_{\mathcal{G}_1}-(\Gamma_2x,\Gamma_2y)_{\mathcal{G}_1}\bigr]\\
& = & (\Gamma_1x,\Gamma_2y)_{\mathcal{G}_1}+(\Gamma_2x,\Gamma_1y)_{\mathcal{G}_1}\phantom{\frac{1}{2}}
\end{eqnarray*}
for $x,y\in D(-H_0^{\star})$. This establishes condition \eqref{BT-1}. For the surjectivity let $y_1, y_2\in\mathcal{G}_1$ be given. By the surjectivity of $F$ we find $x\in D(H_0^{\star})$ such that $F_1x=\frac{1}{\sqrt{2}}(y_1+y_2)$ and $F_2x=\frac{1}{\sqrt{2}}L_0(y_1-y_2)$ hold. Thus, we obtain the following two equations
\begin{eqnarray*}
\Gamma_2x\!\!\!&=&\!\!\!\frac{1}{\sqrt{2}}\bigl(F_1x-L_0^{-1}F_2x\bigr)\;=\;\frac{1}{\sqrt{2}}\biggl(\frac{y_1}{\sqrt{2}}+\frac{y_2}{\sqrt{2}}-\frac{y_1}{\sqrt{2}}+\frac{y_2}{\sqrt{2}}\biggr)\;=\;\frac{1}{\sqrt{2}}\frac{2}{\sqrt{2}}\,y_2 \;=\; y_2\\
\Gamma_1x\!\!\!&=&\!\!\!\frac{1}{\sqrt{2}}\bigl(F_1x-L_0^{-1}F_2x\bigr) \;=\;\frac{1}{\sqrt{2}}\biggl(\frac{y_1}{\sqrt{2}}+\frac{y_2}{\sqrt{2}}+\frac{y_1}{\sqrt{2}}-\frac{y_1}{\sqrt{2}}\biggr) \;=\; \frac{1}{\sqrt{2}}\frac{2}{\sqrt{2}}\,y_1 \;=\; y_1
\end{eqnarray*}
which finish the proof.
\end{proof}

\smallskip

Now we show that under the assumption, that a given boundary system induces at least one skew-self-adjoint extension $H$, the statement of Theorem~A arises from Theorem~B by identifying unitary operators in $L(\mathcal{G}_1,\mathcal{G}_2)$ and unitary operators in $L(\mathcal{G}_1)$ by composition with the inverse of the unitary operator in $L(\mathcal{G}_1,\mathcal{G}_2)$ that corresponds to the extension $H$. The bijection $\Psi$ that appears below originates from Theorem~A, i.e., from the skew-symmetric version of \cite[Theorem III.1.6]{GG}, see also Section \ref{SEC:3}. For the formulation of the next result we use the abbreviations
\[
U(\mathcal{G}_1,\mathcal{G}_2)= \bigl\{L\in L(\mathcal{G}_1,\mathcal{G}_2)\:;\: L \text{ unitary}\bigr\} \;\text{ and }\; U(\mathcal{G})= U(\mathcal{G},\mathcal{G}).
\]
for Hilbert spaces $\mathcal{G}_1$, $\mathcal{G}_2$, and $\mathcal{G}$.

\smallskip

\begin{thm}\label{THM} Let $H_0$ be skew-self-adjoint and let $(\Omega,\mathcal{G}_1,\mathcal{G}_2,F,\omega)$ be a boundary system for $H_0$, where $\Omega$ is the standard symmetric form and $\omega$ is the standard unitary form. Let $L_0\colon\mathcal{G}_1\rightarrow\mathcal{G}_2$ be a unitary operator. Let $(\mathcal{G}_1,\Gamma_1,\Gamma_2)$ be the boundary triplet established in Proposition \ref{LEM-1} and let
$$
\Psi\colon U(\mathcal{G}_1)\longrightarrow\bigl\{H\colon D(H)\subseteq \mathcal{H}\rightarrow\mathcal{H}\:;\:H_0\subseteq H \text{ and } H=-H^\star \bigr\}
$$
be the bijection established by Gorbachuk et al.~\cite{GG} which is defined via
$$
\begin{array}{r@{}l}\vspace{3pt}
\Psi(L)&{}\;=\;-H_0^{\star}|_{D(\Psi(L))}\\
D(\Psi(L))&{}\;=\;\{x\in D(-H_0^{\star})\:;\:(L-1)\Gamma_1x+(L+1)\Gamma_2x=0\}.
\end{array}
$$
Then the bijection
$$
\bar{\Psi}\colon U(\mathcal{G}_1,\mathcal{G}_2)\longrightarrow\bigl\{H\colon D(H)\subseteq \mathcal{H}\rightarrow\mathcal{H}\:;\:H\subseteq H_0^{\star} \text{ and } H=-H^\star \bigr\}
$$
of Schubert et al.~\cite{BS} is given by
$$
\begin{array}{r@{}l}\vspace{3pt}
\bar{\Psi}&(L)\;=\;-\Psi(L_0^{-1}L).\phantom{xxxxxxxxxxxxxxxxxxxx}
\end{array}
$$
\end{thm}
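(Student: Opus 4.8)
The plan is to read off $\bar\Psi$ and $\Psi$ from the characterizations already available and then to reduce the claimed identity to a bijective change of the unitary parameter together with one elementary domain computation; since both operators in question are restrictions of $H_0^{\star}$, matching their domains will suffice.

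First I would record that, $L_0$ being unitary, the map $U(\mathcal{G}_1,\mathcal{G}_2)\to U(\mathcal{G}_1)$, $L\mapsto M:=L_0^{-1}L$, is a bijection, so $L\mapsto-\Psi(L_0^{-1}L)$ is well-defined; by the remarks at the beginning of Section~\ref{SEC:1}, multiplication by $-1$ turns the skew-self-adjoint extensions of $H_0$ produced by $\Psi$ into skew-self-adjoint restrictions of $H_0^{\star}$ without changing domains. By Theorem~A, $\bar\Psi(L)=H_0^{\star}|_{D_L}$ where $D_L=\{x\in D(H_0^{\star})\:;\:LF_1x=F_2x\}$, while by the definition of $\Psi$ we have $-\Psi(M)=H_0^{\star}|_{D(\Psi(M))}$ with $D(\Psi(M))=\{x\in D(-H_0^{\star})\:;\:(M-1)\Gamma_1x+(M+1)\Gamma_2x=0\}$; recall $D(-H_0^{\star})=D(H_0^{\star})$, as noted in the proof of Proposition~\ref{PROP-0}.

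The only thing left to verify is $D_L=D(\Psi(L_0^{-1}L))$. For this I would reuse the two identities established in the proof of Proposition~\ref{LEM-1}, namely $F_1x=\tfrac{1}{\sqrt2}(\Gamma_1x+\Gamma_2x)$ and $L_0^{-1}F_2x=\tfrac{1}{\sqrt2}(\Gamma_1x-\Gamma_2x)$ for $x\in D(H_0^{\star})$. Since $L_0^{-1}$ is a bijection, $LF_1x=F_2x$ is equivalent to $L_0^{-1}LF_1x=L_0^{-1}F_2x$, that is, to $M(\Gamma_1x+\Gamma_2x)=\Gamma_1x-\Gamma_2x$, which rearranges to $(M-1)\Gamma_1x+(M+1)\Gamma_2x=0$; every step is reversible. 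Hence $D_L=D(\Psi(M))$ and therefore $\bar\Psi(L)=H_0^{\star}|_{D_L}=H_0^{\star}|_{D(\Psi(M))}=-\Psi(M)=-\Psi(L_0^{-1}L)$, as claimed.

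I do not anticipate a genuine obstacle; the proof is a substitution. The points that need care are purely bookkeeping: that passing from an extension of $H_0$ to a restriction of $H_0^{\star}$ via multiplication by $-1$ leaves the domain unchanged, the identification $D(-H_0^{\star})=D(H_0^{\star})$, the unitarity of $M=L_0^{-1}L$ so that $\Psi(M)$ is defined at all, and the reversibility of each implication in the domain computation, which relies on the invertibility of $L_0^{-1}$. Bijectivity of $\bar\Psi$ is then inherited, being the composition of the bijection $L\mapsto L_0^{-1}L$, the Gorbachuk et al.\ bijection $\Psi$, and multiplication by $-1$.
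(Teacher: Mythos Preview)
Your proposal is correct and follows essentially the same approach as the paper: the bijection $L\mapsto L_0^{-1}L$, the sign flip $H\mapsto -H$ between extensions of $H_0$ and restrictions of $H_0^{\star}$, and the domain computation via the identities $F_1x=\tfrac{1}{\sqrt2}(\Gamma_1x+\Gamma_2x)$ and $L_0^{-1}F_2x=\tfrac{1}{\sqrt2}(\Gamma_1x-\Gamma_2x)$ from Proposition~\ref{LEM-1} are exactly the ingredients the paper uses. The only cosmetic difference is orientation: the paper \emph{defines} $\bar\Psi(L):=-\Psi(L_0^{-1}L)$, shows it is a bijection as a composition of bijections, and then verifies that its domain description matches that of Theorem~A, whereas you start from Theorem~A's $\bar\Psi$ and verify the formula; the computations are identical.
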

\begin{proof} We define $\bar{\Psi}$ as above, show that it is a bijection, and establish that it gives precisely the correspondence outlined in Theorem A. We observe that 
$$
U(\mathcal{G}_1,\mathcal{G}_2)\longrightarrow U(\mathcal{G}_1),\; L\mapsto L_0^{-1}L
$$
is a bijection. Moreover, the map
$$
\sigma\colon\bigl\{H\colon D(H)\subseteq \mathcal{H}\rightarrow\mathcal{H}\:;\:H_0\subseteq H \;\&\; H=-H^{\star}\bigr\}\longrightarrow\bigl\{H\colon D(H)\subseteq \mathcal{H}\rightarrow\mathcal{H}\:;\:H\subseteq H_0^{\star} \;\&\; H=-H^{\star}\bigr\}
$$
given by
\[
  \sigma H=-H
\]
is well-defined and bijective with inverse given by $\sigma^{-1}=\sigma$. Indeed, if $H\supseteq H_0$ is skew-self-adjoint then $-H$ is also skew-self-adjoint and we have $-H=H^{\star}\subseteq H_0^{\star}$. If, conversely, $H\subseteq H_0^{\star}$ is skew-self-adjoint, then $-H$ is skew-self-adjoint and $-H=H^{\star}\supseteq H_0^{\star\star}=H_0$ since $H_0$ is closed. Combining both facts we conclude that $\bar{\Psi}$ is a bijection. For a unitary operator $L\in U(\mathcal{G}_1,\mathcal{G}_2)$ we compute
$$
\bar{\Psi}(L)=-\Psi(L_0^{-1}L)=H_0^{\star}|_{D(-\Psi(L_0^{-1}L))}
$$
and
\begin{eqnarray*}
D(\bar{\Psi}(L))\;=\;D(-\Psi(L_0^{-1}L))  &=& \{x\in D(-H_0^{\star})\:;\:(L_0^{-1}L-1)\Gamma_1x+(L_0^{-1}L+1)\Gamma_2x=0\}\\
& = & \{x\in D(H_0^{\star})\:;\:L_0^{-1}L(\Gamma_1x+\Gamma_2x)=\Gamma_1x-\Gamma_2x\}\\
&= & \{x\in D(H_0^{\star})\:;\:L_0^{-1}L(\sqrt{2}\,F_1x)=\sqrt{2}\,L_0^{-1}F_2x\}\\
&= & \{x\in D(H_0^{\star})\:;\:LF_1x=F_2x\}
\end{eqnarray*}
which shows that $\bar{\Psi}$ establishes precisely the correspondence given in Theorem A.
\end{proof}

\smallskip

We mentioned already that Proposition \ref{PROP-0} and Theorem \ref{THM} both require, more or less explicitly, that our initial operator $H_0$ has at least one skew-self-adjoint extension. We conclude this section by summarizing the different appearances of the latter condition that we met already up to this point and relate them to the equality of the deficiency indices of $H_0$.

\smallskip

\begin{prop}\label{PROP-P} Let $H_0$ be skew-symmetric. Then the following are equivalent.
\begin{compactitem}\vspace{3pt}
\item[(i)] There exists a skew-self-adjoint extension of $H_0$.\vspace{3pt}
\item[(ii)] The deficiency indices of $H_0$ are equal.\vspace{3pt}
\item[(iii)] There exists a boundary triplet for $H_0$.\vspace{3pt}
\item[(iv)] There exists a boundary system for $H_0$ with $\dim\mathcal{G}_1=\dim\mathcal{G}_2$.
\end{compactitem}
\end{prop}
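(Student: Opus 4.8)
The plan is to close the cycle of implications (i) $\Leftrightarrow$ (ii), (iii) $\Leftrightarrow$ (iv), (iii) $\Rightarrow$ (i) and (ii) $\Rightarrow$ (iii), using the material of Section \ref{SEC:2} for all of these except the passage from equal deficiency indices to a boundary triplet, for which one has to invoke (or reprove) von Neumann's classical extension theory.

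Two of these implications are almost immediate. For (iii) $\Rightarrow$ (iv) one applies Proposition \ref{PROP-0}: the boundary system it produces has $\mathcal{G}_1=\mathcal{G}_2$, hence a fortiori $\dim\mathcal{G}_1=\dim\mathcal{G}_2$. For the converse (iv) $\Rightarrow$ (iii) the point is that two Hilbert spaces have the same Hilbert-space dimension precisely when there is a unitary between them (map orthonormal bases indexed by a common set onto one another); thus, from a boundary system with $\dim\mathcal{G}_1=\dim\mathcal{G}_2$---understood, as in Theorem A and Proposition \ref{LEM-1}, to carry the standard symmetric and the standard unitary forms---one picks some $L_0\in U(\mathcal{G}_1,\mathcal{G}_2)$ and feeds it into Proposition \ref{LEM-1} to obtain a boundary triplet $(\mathcal{G}_1,\Gamma_1,\Gamma_2)$ for $H_0$. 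For (iii) $\Rightarrow$ (i) one simply observes that a boundary triplet yields skew-self-adjoint extensions explicitly: Theorem B with $L=1$ gives that $-H_0^{\star}|_{\ker\Gamma_2}$ is a skew-self-adjoint extension of $H_0$. Finally, (i) $\Leftrightarrow$ (ii) is von Neumann's theorem---the operator $iH_0$ being symmetric with deficiency spaces $\ker(H_0^{\star}\pm1)$, a skew-self-adjoint extension of $H_0$ exists if and only if $\dim\ker(H_0^{\star}-1)=\dim\ker(H_0^{\star}+1)$---which may be cited, or replaced by the self-contained argument furnished by the boundary system of Section \ref{SEC:2a}.

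The step that calls for genuine work is (ii) $\Rightarrow$ (iii): from equality of the deficiency indices one has to write down a boundary triplet. I would use von Neumann's formula $D(H_0^{\star})=D(H_0)+\mathcal{N}_{+}+\mathcal{N}_{-}$ (a topological direct sum in the graph norm, with $\mathcal{N}_{\pm}=\ker(H_0^{\star}\mp1)$), fix a unitary $V\colon\mathcal{N}_{+}\to\mathcal{N}_{-}$, set $\mathcal{G}=\mathcal{N}_{+}$, and define $\Gamma_1,\Gamma_2\colon D(-H_0^{\star})\to\mathcal{G}$ by suitable linear combinations of the components $x_{+}$ and $V^{-1}x_{-}$ in the decomposition $x=x_0+x_{+}+x_{-}$; the combinations are to be chosen exactly so that, after expanding $(H_0^{\star}x,y)_{\mathcal{H}}+(x,H_0^{\star}y)_{\mathcal{H}}$ on each summand, the identity \eqref{BT-1} drops out, while surjectivity of $x\mapsto(\Gamma_1x,\Gamma_2x)$ comes from the freedom in choosing $x_{\pm}$. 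This is the skew-symmetric counterpart of the classical construction in \cite{GG} carried out in \cite{BT}, so it can be reproduced or cited. I expect the bookkeeping of the direct-sum decomposition and the verification of \eqref{BT-1} to be the main (though routine) obstacle, together with ensuring that the degenerate case $\mathcal{N}_{+}=\mathcal{N}_{-}=\{0\}$---that is, $H_0$ already skew-self-adjoint---is covered. Combining the four implications then yields the full equivalence.
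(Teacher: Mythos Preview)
Your proposal is correct and follows essentially the same route as the paper, which proves the cycle (i)$\Rightarrow$(ii)$\Rightarrow$(iii)$\Rightarrow$(iv)$\Rightarrow$(i) by citing the skew-symmetric von Neumann theory for the first two arrows, Proposition~\ref{PROP-0} for the third, and Theorem~A for the last. The only cosmetic difference is that the paper closes the loop via (iv)$\Rightarrow$(i) (pick a unitary $L_0\colon\mathcal{G}_1\to\mathcal{G}_2$ and apply Theorem~A) rather than your (iv)$\Rightarrow$(iii) via Proposition~\ref{LEM-1}, but both rest on the same observation that $\dim\mathcal{G}_1=\dim\mathcal{G}_2$ yields such a unitary; your explicit caveat that (iv) must be read with the standard forms is well taken and applies equally to the paper's own argument.
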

\begin{proof} (i)\,$\Rightarrow$\,(ii): Let $H\supseteq H_0$ be a skew-self-adjoint extension. Then by \cite[Lemma 5.4]{BT}, which is a skew-symmetric version of the arguments in Schm\"udgen \cite[Section 14]{S}, the equalities
$$
D(H)\dot+\ker(1+H_0^{\star})=D(-H_0^{\star})=D(H)\dot+\ker(1-H_0^{\star})
$$
hold in the sense of a direct but not necessarily orthogonal sum. This means that the deficiency indices of $H_0$ coincide.

\smallskip

(ii)\,$\Rightarrow$\,(iii): \cite[Theorem 5.1]{BT}.

\smallskip

(iii)\,$\Rightarrow$\,(iv): Proposition \ref{PROP-0}.

\smallskip

(iv)\,$\Rightarrow$\,(i): If $\dim\mathcal{G}_1=\dim\mathcal{G}_2$ holds, then there exists a unitary operator $L\colon\mathcal{G}_1\rightarrow\mathcal{G}_2$ that induces via Theorem A a skew-self-adjoint extension of $H_0$.
\end{proof}

\smallskip

Reviewing the results of this section suggests that the notions of boundary triplets and boundary systems are equivalent in the sense that the scope of their extension theories is the same. This is indeed true, but only for the case that skew-self-adjoint extensions are considered. In view of Proposition \ref{PROP-P}, enlarging the class of extensions under consideration means to allow different deficiency indices, and suggests that an extension theory based on boundary systems, then necessarily with $\dim\mathcal{G}_1\not=\dim\mathcal{G}_2$, can yield new insights.

\smallskip


\section{Boundary systems for operators with different deficiency indices}\label{SEC:2a}

\smallskip

From Proposition \ref{PROP-P} it follows that if there is no skew-self-adjoint extension for a skew-symmetric operator, then there is no boundary triplet for this operator either. The situation is different for boundary systems. Below, we construct a natural boundary system for any skew-symmetric operator. We use this boundary system to look for maximal dissipative extensions and to give a short proof for equal deficiency indices being necessary and sufficient for the existence of skew-self-adjoint extensions.

\smallskip

\begin{thm}\label{thm:canBS} Let $H_0$ be skew-symmetric. Let $\mathcal{G}_1:= \ker(1-H_0^\star)$, $\mathcal{G}_2:= \ker(1+H_0^\star)$ and let $P_j\colon D(H_0^\star)\to \mathcal{G}_j$ be the projection for $j\in\{1,2\}$ according to the direct decomposition $D(H_0^\star)=D(H_0)\,\dot+\,\mathcal{G}_1\,\dot+\,\mathcal{G}_2$, see \cite[Lemma 2.5]{BT}. Let $\Omega$ be the standard symmetric form, $\omega$ be the standard unitary form and let $F\colon \gr(H_0^\star)\to \mathcal{G}_1\oplus\mathcal{G}_2$ be defined by
$$
F(x,H_0^{\star}x)=({\sqrt{2}}P_1 x,{\sqrt{2}}P_2x).
$$
Then $(\Omega, \mathcal{G}_1,\mathcal{G}_2,F,\omega)$ is a boundary systems for $H_0$. 
\end{thm}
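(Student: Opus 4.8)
The plan is to check the three requirements in the definition of a boundary system, Schubert et al.~\cite[Definition~3.1]{BS}: that $\mathcal{G}_1$ and $\mathcal{G}_2$ are Hilbert spaces, that $F$ is a well-defined surjection onto $\mathcal{G}_1\oplus\mathcal{G}_2$, and that the compatibility identity \eqref{BS-1} holds. The first point is immediate: since $H_0$ is densely defined, $H_0^{\star}$ is closed, hence $1-H_0^{\star}$ and $1+H_0^{\star}$ are closed, so $\mathcal{G}_1=\ker(1-H_0^{\star})$ and $\mathcal{G}_2=\ker(1+H_0^{\star})$ are closed subspaces of $\mathcal{H}$, which we equip with the inner product inherited from $\mathcal{H}$. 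The map $F$ is well-defined because the first coordinate determines an element of $\gr(H_0^{\star})$ uniquely and the projections $P_1,P_2$ exist by the direct decomposition $D(H_0^{\star})=D(H_0)\,\dot+\,\mathcal{G}_1\,\dot+\,\mathcal{G}_2$ of \cite[Lemma~2.5]{BT}; recall also that $D(-H_0^{\star})=D(H_0^{\star})$.

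For surjectivity, given $(g_1,g_2)\in\mathcal{G}_1\oplus\mathcal{G}_2$ I would set $x:=\tfrac{1}{\sqrt{2}}(g_1+g_2)$, which lies in $D(H_0^{\star})$ since $\mathcal{G}_1,\mathcal{G}_2\subseteq D(H_0^{\star})$; uniqueness of the above decomposition gives $P_1x=\tfrac{1}{\sqrt{2}}g_1$ and $P_2x=\tfrac{1}{\sqrt{2}}g_2$, whence $F(x,H_0^{\star}x)=(\sqrt{2}\,P_1x,\sqrt{2}\,P_2x)=(g_1,g_2)$.

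The main work is \eqref{BS-1}. For $x\in D(H_0^{\star})$ I write $x=x_0+P_1x+P_2x$ with $x_0\in D(H_0)$ and record $H_0^{\star}x_0=-H_0x_0$ (skew-symmetry), $H_0^{\star}P_1x=P_1x$, and $H_0^{\star}P_2x=-P_2x$. The left-hand side of \eqref{BS-1} is the \emph{boundary form} $\beta(x,y):=(x,H_0^{\star}y)_{\mathcal{H}}+(H_0^{\star}x,y)_{\mathcal{H}}$, which is sesquilinear with $\beta(x,y)=\overline{\beta(y,x)}$. Using $(x_0,H_0^{\star}y)_{\mathcal{H}}=(H_0x_0,y)_{\mathcal{H}}$ for $x_0\in D(H_0)$ (definition of the adjoint) together with $H_0^{\star}x_0=-H_0x_0$ one checks that $\beta(x,y)=0$ whenever $x\in D(H_0)$, and likewise whenever $y\in D(H_0)$. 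Hence $\beta(x,y)=\beta(P_1x+P_2x,\,P_1y+P_2y)$, and expanding the four remaining terms with the identities above, the two mixed $\mathcal{G}_1$--$\mathcal{G}_2$ terms vanish individually while the diagonal ones give
\[
\Omega\bigl((x,H_0^{\star}x),(y,H_0^{\star}y)\bigr)=\beta(x,y)=2(P_1x,P_1y)_{\mathcal{H}}-2(P_2x,P_2y)_{\mathcal{H}}.
\]
On the other hand, by the definition of $F$ and of the standard unitary form,
\[
\omega\bigl(F(x,H_0^{\star}x),F(y,H_0^{\star}y)\bigr)=(\sqrt{2}\,P_1x,\sqrt{2}\,P_1y)_{\mathcal{G}_1}-(\sqrt{2}\,P_2x,\sqrt{2}\,P_2y)_{\mathcal{G}_2}=2(P_1x,P_1y)_{\mathcal{H}}-2(P_2x,P_2y)_{\mathcal{H}},
\]
so both sides coincide and \eqref{BS-1} holds.

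Everything here is elementary; the step needing care is \eqref{BS-1}, and within it the two observations that (a) $\beta$ annihilates the $D(H_0)$-component, by skew-symmetry together with the defining relation of the adjoint, and (b) the $\mathcal{G}_1$--$\mathcal{G}_2$ cross terms cancel because $H_0^{\star}$ acts as $+1$ on $\mathcal{G}_1$ and as $-1$ on $\mathcal{G}_2$ — this is exactly what fixes the normalising factors $\sqrt{2}$ in the definition of $F$ and forces $\omega$ to be the standard unitary (rather than symmetric) form.
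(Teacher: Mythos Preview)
Your proof is correct and follows essentially the same approach as the paper: surjectivity of $F$ via the direct decomposition, and verification of \eqref{BS-1} by writing $x=x_0+P_1x+P_2x$, using $H_0^\star P_1x=P_1x$, $H_0^\star P_2x=-P_2x$, and expanding. The paper merely calls the computation ``lengthy but straightforward'', whereas you spell out explicitly that $\beta$ kills the $D(H_0)$-components and that the $\mathcal{G}_1$--$\mathcal{G}_2$ cross terms vanish individually.
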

\begin{proof} The direct decomposition
\[
D(H_0^{\star})=D(H_0)\,\dot+\,\ker(1-H_0^{\star})\,\dot+\,\ker (1+H_0^{\star})
\]
shows immediately that $F$ is surjective. It thus suffices to show that \eqref{BS-1} holds. For this, we note that $H_0^\star P_1x=P_1x$ and $H_0^\star P_2x=-P_2x$ holds for all $x\in D(H_0^\star)$. Let $x,y\in D(H_0^\star)$. Using the orthogonal decomposition above, we have
\[
x=x_0+P_1x+P_2x \;\text{ and }\; x=y_0+P_1y+P_2y
\]
for some $x_0,y_0\in D(H_0)$. It is a lengthy but straightforward computation to verify  \begin{align*}
    \Omega ((x,H_0^\star x),(y,H_0^\star y)) & = ( P_1 x,P_1y)_{\mathcal{H}}+( P_1 x, P_1y)_{\mathcal{H}} -(P_2 x,P_2y)_{\mathcal{H}}-( P_2 x, P_2y)_{\mathcal{H}}
     \\ & =  (\sqrt{2} P_1 x,\sqrt{2}P_1y)_{\mathcal{H}} - (\sqrt{2} P_2 x,\sqrt{2} P_2y)_{\mathcal{H}}
     \\ & = \omega(F(x,H_0^\star x),F(y,H_0^\star y)),
  \end{align*}
  which implies the assertion.
\end{proof}

From Theorem \ref{thm:canBS} and Theorem~A we obtain an alternative proof for the equality of deficiency indices to be a necessary and sufficient condition for the existence of skew-self-adjoint extensions.

\smallskip
 
\begin{cor} Let $H_0$ be skew-symmetric. Then there exists a skew-self-adjoint extension $H\supseteq H_0$ if and only if $\dim\ker(1-H_0^\star)=\dim\ker(1+H_0^\star)$ holds. 
\end{cor}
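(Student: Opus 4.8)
The plan is to obtain the corollary as an immediate consequence of Theorem~\ref{thm:canBS} and Theorem~A, together with the elementary fact recorded at the start of Section~\ref{SEC:1} (and reused via the map $\sigma$ in the proof of Theorem~\ref{THM}) that $H\mapsto -H$ is a bijection between the skew-self-adjoint extensions $H\supseteq H_0$ and the skew-self-adjoint restrictions $\tilde H\subseteq H_0^\star$. Indeed, if $H_0\subseteq H=-H^\star$ then $H^\star\subseteq H_0^\star$, so $-H=H^\star\subseteq H_0^\star$ and $-H$ is skew-self-adjoint; conversely, if $\tilde H\subseteq H_0^\star$ is skew-self-adjoint then $H_0=H_0^{\star\star}\subseteq\tilde H^\star=-\tilde H$ (using that $H_0$ is closed), so $-\tilde H$ extends $H_0$ and is skew-self-adjoint. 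Hence a skew-self-adjoint extension of $H_0$ exists if and only if a skew-self-adjoint restriction of $H_0^\star$ does.

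Next I would invoke Theorem~\ref{thm:canBS}, which furnishes the concrete boundary system $(\Omega,\mathcal{G}_1,\mathcal{G}_2,F,\omega)$ with $\mathcal{G}_1=\ker(1-H_0^\star)$ and $\mathcal{G}_2=\ker(1+H_0^\star)$ (closed subspaces of $\mathcal{H}$, hence Hilbert spaces), built from the standard symmetric form $\Omega$ and the standard unitary form $\omega$; therefore Theorem~A applies to it verbatim. For the ``if'' part: if $\dim\ker(1-H_0^\star)=\dim\ker(1+H_0^\star)$, then $\mathcal{G}_1$ and $\mathcal{G}_2$ have equal Hilbert-space dimension, so there is a unitary $L\colon\mathcal{G}_1\to\mathcal{G}_2$; by Theorem~A the restriction $\tilde H:=H_0^\star|_{\{x\in D(H_0^\star)\:;\:LF_1x=F_2x\}}$ is skew-self-adjoint, and then $-\tilde H\supseteq H_0$ is the required skew-self-adjoint extension. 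For the ``only if'' part: a skew-self-adjoint $H\supseteq H_0$ yields a skew-self-adjoint $-H\subseteq H_0^\star$, so by Theorem~A there is a unitary $L\colon\mathcal{G}_1\to\mathcal{G}_2$ implementing $D(-H)$, and the mere existence of such a unitary forces $\dim\mathcal{G}_1=\dim\mathcal{G}_2$.

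I do not expect any genuinely hard step; the statement is a corollary precisely because the analytic content already sits in Theorem~\ref{thm:canBS} and in Theorem~A (which in turn rests on \cite{GG}/\cite{BT}). The only points requiring care are the sign-and-inclusion bookkeeping in the $H\leftrightarrow-H$, $H_0\leftrightarrow H_0^\star$ dictionary---in particular the identity $H_0^{\star\star}=H_0$, valid since $H_0$ is closed---and the standard observation that a unitary operator between two Hilbert spaces exists if and only if they share the same orthonormal-basis cardinality, since a unitary maps an orthonormal basis bijectively onto an orthonormal basis. It is worth stressing that this argument does not pass through \cite[Lemma~5.4]{BT}, so it genuinely provides the announced alternative proof of the equivalence (i)\,$\Leftrightarrow$\,(ii) in Proposition~\ref{PROP-P}.
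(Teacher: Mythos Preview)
Your proof is correct and follows essentially the same approach as the paper: invoke the canonical boundary system of Theorem~\ref{thm:canBS} and apply Theorem~A, reducing the question to the existence of a unitary $\mathcal{G}_1\to\mathcal{G}_2$, which is equivalent to $\dim\mathcal{G}_1=\dim\mathcal{G}_2$. The only difference is that you spell out the $H\leftrightarrow -H$ bookkeeping between skew-self-adjoint extensions of $H_0$ and skew-self-adjoint restrictions of $H_0^\star$, which the paper's two-line proof leaves implicit by relying on the remarks at the beginning of Section~\ref{SEC:1}.
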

\begin{proof} Let $(\Omega,\mathcal{G}_1,\mathcal{G}_2,F,\omega)$ be the boundary system constructed in Theorem \ref{thm:canBS}. Then by Theorem~A, skew-self-adjoint extensions exist if and only if there exists a unitary operator from $\mathcal{G}_1$ onto $\mathcal{G}_2$. The latter is equivalent to $\dim\mathcal{G}_1=\dim\mathcal{G}_2$.
\end{proof}

\smallskip

A classical application of the theory of boundary triplets is the characterization of those extensions of a given operator $H_0$ that generate a $\Cnull$-semigroup. For this it is enough if the extension is maximal dissipative, and neither necessary that it is skew-self-adjoint, nor necessary that skew-self-adjoint extensions exist at all. The mere existence of a boundary triplet however requires the latter. The following corollary could be a silver lining indicating that boundary systems might lead to a suitable theory of maximal dissipative extensions for operators with different deficiency indices.

\smallskip

\begin{cor} Let $H_0$ be skew-symmetric and $(\Omega,\mathcal{G}_1,\mathcal{G}_2,F,\omega)$ the boundary system constructed in Theorem \ref{thm:canBS}. Then $H:= H_0^\star|_{D(H_0)\dot{+}\mathcal{G}_2}$ is a maximal dissipative extension of $H_0$. 
\end{cor}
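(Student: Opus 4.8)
The plan is to check the two hypotheses of the Lumer--Phillips theorem for $H$: that $H$ is densely defined and dissipative, and that $1-H$ is surjective; maximal dissipativity of $H$ then follows. Density is immediate, since $D(H)=D(H_0)\,\dot+\,\mathcal G_2\supseteq D(H_0)$ and $H_0$ is densely defined, while \cite[Lemma 2.5]{BT} guarantees that this is a genuine (closed) direct decomposition, so that the projection $P_1\colon D(H_0^\star)\to\mathcal G_1$ vanishes on $D(H)$ and $H$ is well defined.

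For dissipativity I would evaluate the defining identity \eqref{BS-1} of the boundary system from Theorem~\ref{thm:canBS} on the diagonal $x=y$. For the standard symmetric form one has $\Omega((x,H_0^\star x),(x,H_0^\star x))=2\Re(H_0^\star x,x)_{\mathcal H}$, and for the standard unitary form together with $F(x,H_0^\star x)=(\sqrt2\,P_1x,\sqrt2\,P_2x)$ one gets $\omega(F(x,H_0^\star x),F(x,H_0^\star x))=2\norm{P_1x}_{\mathcal G_1}^2-2\norm{P_2x}_{\mathcal G_2}^2$. Since $P_1x=0$ for $x\in D(H)$, \eqref{BS-1} yields $\Re(Hx,x)_{\mathcal H}=-\norm{P_2x}^2\leqslant0$, so $H$ is dissipative. (Alternatively one can argue directly: writing $x=x_0+g_2$ with $x_0\in D(H_0)$ and $g_2\in\mathcal G_2=\ker(1+H_0^\star)$, one has $H_0^\star x_0=-H_0x_0$ and $H_0^\star g_2=-g_2$; by skew-symmetry $\Re(H_0x_0,x_0)=0$, and the two mixed terms $-(H_0x_0,g_2)$ and $-(g_2,x_0)$ sum to $(x_0,g_2)-\overline{(x_0,g_2)}$ and are therefore purely imaginary, leaving $\Re(H_0^\star x,x)=-\norm{g_2}^2$.)

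For the range condition I would first record that $1+H_0$ is bounded below: skew-symmetry gives $\norm{(1+H_0)y}^2=\norm y^2+\norm{H_0y}^2\geqslant\norm y^2$, so $1+H_0$ is injective with closed range, and hence $\mathcal H=\ran(1+H_0)\oplus\ker\bigl((1+H_0)^\star\bigr)=\ran(1+H_0)\oplus\mathcal G_2$. Next, for $x=x_0+g_2\in D(H)$, using again $H_0^\star x_0=-H_0x_0$ and $H_0^\star g_2=-g_2$, one computes $(1-H)x=(1-H_0^\star)(x_0+g_2)=(1+H_0)x_0+2g_2$. Therefore $\ran(1-H)=\ran(1+H_0)+\mathcal G_2=\mathcal H$.

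With $H$ densely defined and dissipative and $\ran(1-H)=\mathcal H$, the Lumer--Phillips theorem shows that $H$ generates a $\Cnull$-semigroup of contractions, so $H$ is maximal dissipative; moreover $H\supseteq H_0^\star|_{D(H_0)}=-H_0$, which is the asserted extension (recall from Section~\ref{SEC:1} that every restriction of $H_0^\star$ extends $-H_0$, and that replacing $H_0$ by $-H_0$ changes nothing in the present context). I do not anticipate a genuine obstacle here; the only point requiring care is the sign bookkeeping---in particular that $H_0^\star$ restricts to $-H_0$ on $D(H_0)$, which is precisely what pairs the deficiency space $\mathcal G_2=\ker(1+H_0^\star)$ with the orthogonal complement of $\ran(1+H_0)$ and thereby closes the range argument.
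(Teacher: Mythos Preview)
Your dissipativity argument coincides with the paper's: both evaluate \eqref{BS-1} on the diagonal and use $P_1x=0$ for $x\in D(H)$.

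For maximality you take a genuinely different route. The paper argues indirectly: it first identifies the adjoint $H^\star=-H_0^\star|_{D(H_0)\dot+\mathcal G_1}$ via a separate lemma (Lemma~\ref{lem:compadj}), then shows $1-H^\star$ is injective by the estimate $-2\Re\bigl((1-H^\star)y,y\bigr)\leqslant -2\|y\|^2$, and finally combines this with closedness of $\ran(1-H)$ (from dissipativity) and $\ran(1-H)=\ker(1-H^\star)^\perp$. Your argument is more direct: from $\|(1+H_0)y\|^2=\|y\|^2+\|H_0y\|^2$ you get $\mathcal H=\ran(1+H_0)\oplus\mathcal G_2$, and the explicit formula $(1-H)(x_0+g_2)=(1+H_0)x_0+2g_2$ then exhibits surjectivity of $1-H$ immediately. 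This avoids computing $H^\star$ altogether and is shorter; the paper's approach, in exchange, records the adjoint explicitly, which is of some independent interest.

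Your final remark about the sign is apt: $H$ literally extends $-H_0$, not $H_0$, so the phrase ``extension of $H_0$'' in the statement is to be read in the loose sense explained in Section~\ref{SEC:1}.
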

\begin{proof} At first, we show that $H$ is dissipative. Let $x\in D(H)$. Then we use $P_1 x = 0$ to compute 
 \begin{align*}
   2\Re (H_0^\star x,x)_{\mathcal{H}} & =( x,H_0^\star x)_{\mathcal{H}} + (H_0^\star x,x)_{\mathcal{H}}
   \\ & =  \Omega((x,H_0^\star x),(x,H_0^\star x)) 
   \\ & =  \omega(F(x,H_0^\star x),F(x,H_0^\star x)) 
   \\ & = 2 \left((P_1 x,P_1 x)_{\mathcal{H}}-(P_2 x,P_2 x)_{\mathcal{H}}\right)
   \\ & =  -2(P_2 x,P_2 x)_{\mathcal{H}} \leqslant 0.
 \end{align*}
Next, we prove that $H$ is maximal dissipative which is equivalent to $1-H$ being surjective, see e.g.~\cite[Section 7]{BT}. For this, it suffices to prove that $1-H^\star$ is injective since $\ran(1-H)=\ker(1-H^{\star})^{\perp}$ holds and $\ran(1-H)\subseteq\mathcal{H}$ is closed as $H$ is dissipative. Using the boundary system, it is not difficult to see that $D(H^\star)=D(H_0)\dot+\mathcal{G}_1$ and $H^{\star}=-H_0^{\star}|_{D(H_0)\,\dot{+}\,\mathcal{G}_1}$ holds, see Lemma \ref{lem:compadj} below. For $y\in D(H^\star)$ we use the equations above and $P_2y=0$ to obtain
\begin{align*}
-2 \Re ((1-H^\star)y,y)_{\mathcal{H}} & = -2(y,y)_{\mathcal{H}}+2\Re (-H_0^\star y,y)_{\mathcal{H}} 
   \\ & =- 2(y,y)_{\mathcal{H}} -2\bigl((P_1 y,P_1 y)_{\mathcal{H}}-(P_2y,P_2y)_{\mathcal{H}}\bigr)\\
    & = - 2(y,y)_{\mathcal{H}} -2(P_1 y,P_1 y)_{\mathcal{H}}\leqslant -2 (y,y)_{\mathcal{H}},
 \end{align*}
which proves that $1-H^\star$ is injective and concludes the proof.
\end{proof}

\smallskip

\begin{lem}\label{lem:compadj} Let $H_0$ be skew-symmetric, let $\mathcal{G}_1:=\ker(1-H_0^\star)$, $\mathcal{G}_2:=\ker(1+H_0^\star)$ and let $H:=H_0^\star|_{D(H_0)\dot{+}\mathcal{G}_2}$. Then $H^\star=-H_0^\star|_{D(H_0)\dot{+}\mathcal{G}_1}$.
\end{lem}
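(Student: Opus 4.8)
The plan is to derive everything from the abstract Green identity that the boundary system of Theorem~\ref{thm:canBS} encodes. Spelling out \eqref{BS-1} for this particular boundary system---which is exactly the computation in the proof of Theorem~\ref{thm:canBS}---gives the identity
\[
(H_0^{\star}x,y)_{\mathcal{H}}+(x,H_0^{\star}y)_{\mathcal{H}}=2(P_1x,P_1y)_{\mathcal{H}}-2(P_2x,P_2y)_{\mathcal{H}}
\]
valid for all $x,y\in D(H_0^{\star})$. Alongside this I would record three elementary facts: by the direct decomposition $D(H_0^{\star})=D(H_0)\dot+\mathcal{G}_1\dot+\mathcal{G}_2$ of \cite[Lemma~2.5]{BT} one has $D(H)=\{x\in D(H_0^{\star})\:;\:P_1x=0\}$ and $D(H_0)\dot+\mathcal{G}_1=\{x\in D(H_0^{\star})\:;\:P_2x=0\}$; by definition $Hx=H_0^{\star}x$; and skew-symmetry of $H_0$ gives $H_0^{\star}x=-H_0x$ for $x\in D(H_0)$. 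Since $D(H)\supseteq D(H_0)$ is dense, $H^{\star}$ is well defined.

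First I would prove the inclusion $-H_0^{\star}|_{D(H_0)\dot+\mathcal{G}_1}\subseteq H^{\star}$. Given $g\in D(H_0^{\star})$ with $P_2g=0$ and any $x\in D(H)$, we have $P_1x=0$, so both terms on the right-hand side of the Green identity vanish and $(Hx,g)_{\mathcal{H}}=(H_0^{\star}x,g)_{\mathcal{H}}=-(x,H_0^{\star}g)_{\mathcal{H}}=(x,-H_0^{\star}g)_{\mathcal{H}}$; hence $g\in D(H^{\star})$ with $H^{\star}g=-H_0^{\star}g$.

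Next I would prove the reverse inclusion. Let $g\in D(H^{\star})$ and put $h:=H^{\star}g$. Restricting the test relation $(Hx,g)_{\mathcal{H}}=(x,h)_{\mathcal{H}}$ to $x\in D(H_0)$ and using $H_0^{\star}x=-H_0x$ there yields $(H_0x,g)_{\mathcal{H}}=(x,-h)_{\mathcal{H}}$ for all $x\in D(H_0)$, i.e.\ $g\in D(H_0^{\star})$ and $h=-H_0^{\star}g$. It remains to show $P_2g=0$, and for this I would test against an arbitrary $x\in\mathcal{G}_2\subseteq D(H)$: feeding $h=-H_0^{\star}g$ into $(Hx,g)_{\mathcal{H}}=(x,h)_{\mathcal{H}}$ and then into the Green identity gives, using $P_1x=0$ and $P_2x=x$, that $0=(H_0^{\star}x,g)_{\mathcal{H}}+(x,H_0^{\star}g)_{\mathcal{H}}=-2(x,P_2g)_{\mathcal{H}}$, and choosing $x=P_2g\in\mathcal{G}_2$ forces $P_2g=0$. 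Combined with the first part this gives $D(H^{\star})=D(H_0)\dot+\mathcal{G}_1$ and $H^{\star}=-H_0^{\star}|_{D(H_0)\dot+\mathcal{G}_1}$.

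I do not expect a genuine obstacle here; the argument is a short verification. The single point that needs care is that the decomposition $D(H_0^{\star})=D(H_0)\dot+\mathcal{G}_1\dot+\mathcal{G}_2$ is direct but \emph{not} orthogonal in $\mathcal{H}$, so one cannot expand the inner products summand by summand and discard cross terms; channelling the whole computation through the boundary system's Green identity, which already carries the correct cancellations, is what keeps it clean.
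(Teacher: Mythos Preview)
Your proof is correct and follows essentially the same route as the paper: both arguments channel the computation through the Green identity of the boundary system in Theorem~\ref{thm:canBS} and conclude by observing that $P_1x=0$ on $D(H)$ together with $P_2[D(H)]=\mathcal{G}_2$ forces $P_2y=0$ for any $y\in D(H^{\star})$. The only cosmetic difference is that the paper obtains $D(H^{\star})\subseteq D(H_0^{\star})$ in one line from the operator inclusions $-H_0\subseteq H\subseteq H_0^{\star}\Rightarrow H_0\subseteq H^{\star}\subseteq -H_0^{\star}$, whereas you derive it by testing against $x\in D(H_0)$; the content is the same.
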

\begin{proof}
 First of all note that $-H_0\subseteq H\subseteq H_0^\star$. Hence, $H_0\subseteq H^\star\subseteq -H_0^\star$. Let $(\Omega,\mathcal{G}_1,\mathcal{G}_2,F,\omega)$ be the boundary system for $H_0$ introduced in Theorem \ref{thm:canBS}. We have that $y\in D(H^\star)$ holds if and only if 
\[
    \forall\:x\in D(H)\colon (Hx,y)_{\mathcal{H}}=(x,-H_0^\star y)_{\mathcal{H}},
\]
holds. By the definition of $\Omega$ this is equivalent to
 \[
     \forall\:x\in D(H)\colon 0=(x,H_0^\star y)_{\mathcal{H}}+(H_0^\star x,y)_{\mathcal{H}}=\Omega((x,H_0^\star x),(y,H_0^\star y))
 \]
 
being true. By Theorem \ref{thm:canBS}, we deduce that $y\in D(H^\star)$ holds if and only if 
 \[
    \forall\:x\in D(H)\colon (P_1 x,P_1 y)-(P_2 x,P_2 y)=0
 \]
is valid. Since $P_1 x =0$ for all $x\in D(H)$ and $P_2[D(H)]=\mathcal{G}_2$, we infer
 \[
   (y,H_0^\star y) \in \gr(H^\star) \iff y\in D(H_0^\star)\ \&\ P_2 y = 0,
 \]
 which implies the assertion.
\end{proof}

\begin{ex} Let $\mathcal{H}=\operatorname{L}_2(0,\infty)$, $H f = f'$ with $D(H)=\operatorname{W}_{0}^{1,2}(0,\infty)$. Then $H_0$ does not have any skew-self-adjoint extension. However, it is well-known that $H$ is maximal dissipative and generates the right shift semigroup.
\end{ex}


\section{Appendix: direct proof for theorem~b}\label{SEC:3}

\smallskip

In the book \cite{GG} the authors consider extensions of symmetric operators instead of skew-symmetric ones and deal with relations. For the convenience of the reader we give a proof for Theorem B, or more precisely for the bijectivity of the map $\Psi$ that appeared in Theorem \ref{THM}, based on the notation and results outlined in the survey \cite{BT}.

\medskip

\textit{Proof (of Theorem~B). }By \cite[Proposition 4.8]{BT} we know that
$$
\Phi\colon\bigl\{H\colon D(H)\subseteq \mathcal{H}\rightarrow\mathcal{H}\:;\:H_0\subseteq H \text{ maximal dissipative}\bigr\}\longrightarrow\bigl\{L\in L(\mathcal{G})\:;\: L \text{ contraction}\bigr\},
$$
given by $\Phi(H)\colon \Gamma_1x+\Gamma_2x\mapsto \Gamma_1x-\Gamma_2x$, is well-defined and bijective. In particular, we have $\mathcal{G}=\{\Gamma_1x+\Gamma_2x\:;\:x\in D(H)\}$. We fix a skew-self-adjoint extension $H$ of $H_0$. Then we have
$$
D(-H^{\star})=\bigl\{y\in D(-H_0^{\star})\:;\:\forall\;x\in D(H)\colon (\Gamma_1x,\Gamma_2y)_{\mathcal{G}}+(\Gamma_2x,\Gamma_1y)_{\mathcal{G}}=0\bigr\}.
$$
By \cite[Proposition 2.8]{BT} we have $H\subseteq-H_0^{\star}$. In addition, $H_0\subseteq H$ implies that $H^{\star}\subseteq H_0^{\star}$ is true and thus $H=-H^{\star}$ is equivalent to $D(H)=D(-H^{\star})$. We thus obtain that $H=-H^\star$ if and only if
\begin{equation}\label{A}
D(H)=\bigl\{y\in D(-H_0^{\star})\:;\:\forall\;x\in D(H)\colon (\Gamma_1x,\Gamma_2y)_{\mathcal{G}}+(\Gamma_2x,\Gamma_1y)_{\mathcal{G}}=0\bigr\}
\end{equation}
holds. On the other hand we have that $\Phi(H)$ is unitary if and only if
$$
\bigl(\Phi(H)(\Gamma_1x+\Gamma_2x),\Phi(H)(\Gamma_1y+\Gamma_2y)\bigr)_{\mathcal{G}} = \bigl(\Gamma_1x+\Gamma_2x,\Gamma_1y+\Gamma_2y\bigr)_{\mathcal{G}}
$$
holds for all $x,y\in D(H)$. Using the definition of the left hand side of this equality, we obtain after straightforward algebraic manipulations that $\Phi(H)$ is unitary if and only if
\begin{equation}\label{EQ2}
\forall\:x,y\in D(H)\colon (\Gamma_1x,\Gamma_2y)_{\mathcal{G}}+(\Gamma_2x,\Gamma_1y)_{\mathcal{G}}=0
\end{equation}
is valid. Now we claim that $H=-H^\star$ holds if and only if $\Phi(H)$ is unitary.

\medskip

\textquotedblleft{}$\Rightarrow$\textquotedblright{} Assume $H=-H^\star$. Then equation \eqref{A} shows that $(\Gamma_1x,\Gamma_2y)_{\mathcal{G}}+(\Gamma_2x,\Gamma_1y)_{\mathcal{G}}=0$ holds for all $x, y\in D(H)$. This is precisely the condition in equation \eqref{EQ2} and thus $\Phi(H)$ is unitary.

\medskip

\textquotedblleft{}$\Leftarrow$\textquotedblright{} Let $\Phi(H)$ be unitary. Then the condition in equation \eqref{EQ2} implies that in \eqref{A} the inclusion \textquotedblleft{}$\subseteq$\textquotedblright{} holds. It thus remains to establish \textquotedblleft{}$\supseteq$\textquotedblright{}. To show this we fix $y\in D(H_0^{\star})$ such that
$$
(\Gamma_1x,\Gamma_2y)_{\mathcal{G}}+(\Gamma_2x,\Gamma_1y)_{\mathcal{G}}=0
$$
holds for all $x\in D(H)$. Multiplying the latter with two and adding $(\Gamma_1x,\Gamma_1y)_{\mathcal{G}}+(\Gamma_2x,\Gamma_2y)_{\mathcal{G}}$ on both sides yields
$$
2(\Gamma_1x,\Gamma_2y)_{\mathcal{G}}+2(\Gamma_2x,\Gamma_1y)_{\mathcal{G}}+(\Gamma_1x,\Gamma_1y)_{\mathcal{G}}+(\Gamma_2x,\Gamma_2y)_{\mathcal{G}}=(\Gamma_1x,\Gamma_1y)_{\mathcal{G}}+(\Gamma_2x,\Gamma_2y)_{\mathcal{G}}
$$
which we can reorganize to
$$
(\Gamma_1x+\Gamma_2x,\Gamma_1y+\Gamma_2y)_{\mathcal{G}} = (\Gamma_1x-\Gamma_2x,\Gamma_1y-\Gamma_2y)_{\mathcal{G}}.
$$
Since $x$ belongs to $D(H)$ we know that $\Gamma_1x-\Gamma_2x=\Phi(H)(\Gamma_1x+\Gamma_2x)$ holds, which we can plug in on the right hand side of the last equation. Using that $\Phi(H)$ is unitary on the left hand side establishes
$$
\bigl(\Phi(H)(\Gamma_1x+\Gamma_2x),\Phi(H)(\Gamma_1y+\Gamma_2y)\bigr)_{\mathcal{G}} = \bigl(\Phi(H)(\Gamma_1x+\Gamma_2x),\Gamma_1y-\Gamma_2y\bigr)_{\mathcal{G}}
$$
for every $x\in D(H)$. As $\Phi(H)\colon\mathcal{G}\rightarrow\mathcal{G}$ is surjective this yields that
$$
\Phi(H)(\Gamma_1y+\Gamma_2y) = \Gamma_1y-\Gamma_2y
$$
holds. By \cite[Propositions 4.7 and 4.8]{BT} we know
$$
D(H)=\bigl\{y\in D(-H_0^{\star})\:;\:\Phi(H)(\Gamma_1y+\Gamma_2y)=\Gamma_1y-\Gamma_2y\bigr\}
$$
which establishes $y\in D(H)$ as desired.

\smallskip

To conclude the proof it is enough to observe that the map $\Psi$ in Theorem \ref{THM} is the restriction of the inverse of $\Phi$ as given in \cite[Proposition 4.8]{BT}.\hfill\qed

\normalsize

\bibliographystyle{amsplain}
\providecommand{\bysame}{\leavevmode\hbox to3em{\hrulefill}\thinspace}
\providecommand{\MR}{\relax\ifhmode\unskip\space\fi MR }
\providecommand{\MRhref}[2]{%
  \href{http://www.ams.org/mathscinet-getitem?mr=#1}{#2}
}
\providecommand{\href}[2]{#2}

\end{document}